\newcommand{\R}{\mathbb{R}}
\newcommand{\N}{\mathbb{N}}
\newcommand{\E}{\mathbf{E}}
\newcommand{\Gc}{\mathcal{G}}
\newcommand{\Hc}{\mathcal{H}}
\newcommand{\var}{\textnormal{Var}}
\newcommand{\bin}{\textnormal{Bin}}
\newcommand{\ber}{\textnormal{Ber}}
\newcommand{\plog}{\text{polylog}}
\newcommand{\poly}{\text{poly}}
\newcommand{\Q}{\mathbf{Q}}
\newcommand{\pr}{\textnormal{Pr}}
\newcommand{\m}{\textnormal{m}}
\newcommand{\Pn}{\textnormal{P}}
\newcommand{\Cn}{\textnormal{C}}
\renewcommand{\part}{\text{part}}
\renewcommand{\epsilon}{\varepsilon}
\renewcommand{\theta}{\vartheta}
\renewcommand{\phi}{\varphi}
\renewcommand{\P}{\mathbf{P}}
\renewcommand{\Q}{\mathbf{Q}}
\renewcommand{\d}{\textnormal{d}}
\newcommand{\lb}{\left[}
\newcommand{\rb}{\right]}
\newtheorem{thm}{Theorem}[section]
\newtheorem{lem}[thm]{Lemma}
\newtheorem{cor}[thm]{Corollary}
\newtheorem{conj}[thm]{Conjecture}
\newtheorem{prop}[thm]{Proposition}
\title{Optimal couplings between sparse block models}
\author{James Hirst \\ Department of Mathematics, MIT}
\begin{document}
\maketitle

\begin{abstract}
We study the problem of coupling a stochastic block model with a planted bisection to a uniform random graph having the same average degree. Focusing on the regime where the average degree is a constant relative to the number of vertices $n$, we show that the distance to which the models can be coupled undergoes a phase transition from $O(\sqrt{n})$ to $\Omega(n)$ as the planted bisection in the block model varies. This settles half of a conjecture of Bollob\'{a}s and Riordan and has some implications for sparse graph limit theory. In particular, for certain ranges of parameters, a block model and the corresponding uniform model produce samples which must converge to the same limit point. This implies that any notion of convergence for sequences of graphs with $\Theta(n)$ edges which allows for samples from a limit object to converge back to the limit itself must identify these models.
\end{abstract}

\section{Introduction}
The idea of putting a metric on the space of graphs has generated a significant amount of fruitful research in the last decade. In particular, this has led to an extremely rich theory of dense graph limits, a line of work stemming from \cite{BCLSV08} which remains active today. Here, the word \emph{dense} refers to the scaling in the number of edges: a dense sequence of graphs with $n$ vertices should have $\Theta(n^2)$ edges, and any set of $o(n^2)$ edges doesn't contribute to the limit of the sequence (they become measure 0 sets in the limit). The metric that best captures similarity between dense graphs is the \emph{cut distance}, which, informally, is a measure of how well one can align two graphs to have a small number of edges across every cut. Closeness in cut distance turns out to be equivalent to a number of natural metrics including subgraph counts and free energies. For an excellent exposition of these results, we refer the reader to \cite{Lov12}.

Modulo some technical conditions, a suitably re-normalized version of the cut distance is also a valid and interesting metric for graphs with with $o(n^2)$ edges. In \cite{BR09}, the authors show that for graph sequences in which every $o(n)$ vertices of a graph span $o(n)$ edges (forbidding graphs like a clique on $o(n)$ vertices), many of the nice properties of the cut distance continue to hold. The more recent work of \cite{BCCZ19} uses a relaxation of this condition based on boundedness in an $\ell_p$ sense (ordinary graphs can be thought of as bounded in $\ell_\infty$). However, in both of these results is an implicit requirement that graphs have unbounded degrees as $n$ grows, i.e., $\omega(n)$ edges.

An attempt at treating the $\Theta(n)$ sparsity regime is given in \cite{BR11}. One of the main difficulties here is that the random graph $G(n,c/n)$ concentrates much more weakly than usual. Indeed, the degree of a vertex has mean and variance of the same order $c$, and many key quantities have tails which decay only exponentially in $n$, rather than super-exponentially. As such, basic results which follow from a union bound for denser graphs break down. In this setting, the cut distance turns out to be much too strong of a metric: even a sequence of i.i.d. graphs from $G(n,c/n)$ fails to be a Cauchy sequence. On the other hand, local metrics based on subgraph counts are in some sense too weak, as they fail to distinguish between a random graph and a random bipartite graph of the same average degree.

The most basic object in dense graph limit theory is the graphon, a symmetric and measurable function $W:[0,1]^2 \to [0,1]$. Given a graphon $W$, there is a natural way to sample an $n$-vertex graph $G_n$: choose $x_1,\ldots,x_n$ uniformly from $[0,1]$ and independently place each edge $(i,j)$ with probability $W(x_i,x_j)$. This produces a graph with $\Theta(n^2)$ edges (assuming $W \neq 0$), and such models are well studied. For instance a basic result from graph limit theory says that the sequence $\{G_n\}$ converges to $W$ (under the cut distance) almost surely. 

With a slight modification, we can use this construction to sample graphs with $\Theta(n)$ edges. To avoid confusion, we will call a symmetric and measurable function $\kappa:[0,1]^2 \to \R$ a \emph{kernel} to distinguish it from a graphon which typically maps to $[0,1]$. Given a bounded kernel $\kappa:[0,1]^2 \to \R$, again take $x_1,\ldots,x_n$ as i.i.d. $U(0,1)$ but now place edge $(i,j)$ with probability $\kappa(x_i,x_j)/n$ (projecting this value into $[0,1]$ as required). Denote this random graph model by $G(n,\kappa)$.

Can we still say in some sense that the (sparsely) sampled sequence $\{G_n\}$ converges to $\kappa$? We will see momentarily that there is a natural barrier to this which is not present in the dense case. Given $n$-vertex graphs $G$ and $H$ on the same vertex set, we define the \emph{edit distance} between $G$ and $H$ to be the minimum number of edges which need to be changed to turn $H$ into an isomorphic copy of $G$, and we write
\[ \d_1(G,H) = \min_{G' \cong G} \left|E(G') \triangle E(H) \right|. \]
As in the dense case, it is reasonable to assume that any notion of convergence for sparse graphs should be invariant up to edit distance $o(n)$. Following $\cite{BR11}$, we will call the models $G(n,\kappa_1)$ and $G(n,\kappa_2)$ (or sometimes just $\kappa_1$ and $\kappa_2$) \emph{essentially equivalent} if there is a coupling $\mu$ between $G(n,\kappa_1)$ and $G(n,\kappa_2)$ such that $\E_\mu \d_1(G,H) = o(n)$, where $(G,H) \sim \mu$. We will call them \emph{essentially different} otherwise. Returning to our question above, if we can find $\kappa_1 \neq \kappa_2$ which are essentially equivalent, then samples from $G(n,\kappa_1)$ and $G(n,\kappa_2)$ must converge in some sense to both $\kappa_1$ and $\kappa_2$.

Consider the following example. Let $\kappa_1$ be identically $c$, and take $\kappa_2$ to have value $c+\delta$ on $[0,1/2]\times[1/2,1]$ and $c-\delta$ elsewhere. Henceforth we will set $\Q = G(n,\kappa_1)$ and $\P = G(n,\kappa_2)$, suppressing the dependence on $n$, $c$, and $\delta$ wherever it avoids confusion. In \cite{BR11}, the authors give a simple example to show that when $c \geq \delta$ and $0 \leq \delta \leq 1$, $\P$ and $\Q$ are in fact essentially equivalent. The following extension of this is half of Conjecture 6.3 from \cite{BR11}.

\begin{conj}\label{conj:br}
Let $0 \leq \delta \leq c$. If $\delta < \sqrt{c}$, then $\P$ and $\Q$ are essentially equivalent.
\end{conj}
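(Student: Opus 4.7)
The plan is to construct an explicit coupling $\mu$ of $\P$ and $\Q$ by exploiting the following: conditional on a bisection $(A,B)$ of $[n]$ and the three induced cut counts $(e(A,B), e(A), e(B))$, both models are uniform over graphs with those statistics. So if I can locate, in a typical sample from $\Q$, a bisection whose cut statistics match the typical values under $\P$, the two conditional distributions coincide and can be coupled exactly; the remaining cost then comes only from the $O(\sqrt n)$ fluctuations in these sufficient statistics.

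Concretely, I would first sample $G \sim \P$ and record $(m, k, k_1, k_2)$, the total edge count and cut counts of $G$ with respect to the planted bisection $(S_0, T_0)$; standard Binomial concentration places each within $O(\sqrt n)$ of its mean. Simultaneously draw $m' \sim \bin(\binom{n}{2}, c/n)$, maximally coupled to $m$ so that $|m - m'| = O(\sqrt n)$ with high probability. Then sample $H$ uniformly from pairs $(H, (A, B))$ with $|E(H)| = m'$, $|A| = |B| = n/2$, and $(e_H(A,B), e_H(A), e_H(B))$ within $O(\sqrt n)$ of the target $(k, k_1, k_2)$. Apply a permutation $\pi^{\ast}$ sending $(A, B)$ to $(S_0, T_0)$, and patch up the $O(\sqrt n)$ remaining discrepancies in edge and cut counts by direct edge additions or removals. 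This bounds $\d_1(G, \pi^{\ast}(H))$ by $O(\sqrt n)$ edge changes in expectation, provided $H$ really does come out $\Q$-distributed.

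For this scheme to give $H \sim \Q$, the marginal of $H$ after summing out the bisection must be uniform over $m'$-edge graphs. This reduces to concentration of $N_H$, the number of balanced bisections of $H$ with near-target cut statistics, as $H$ varies over $m'$-edge graphs. The first moment of $N_H$ is straightforward: a uniformly random bisection of $H$ has $e_H(A,B) \approx m'/2$ with Hypergeometric fluctuations on the scale $\sqrt{cn}$, so by a local central limit theorem the probability of hitting target $m'/2 + \delta n/4$ is $\exp(-\Theta(\delta^2 n / c))$. Multiplied by the $\sim 2^n$ bisections, this is $\omega(1)$ whenever $\delta$ lies well below $\sqrt{2c\log 2}$, so mere existence of a good bisection is comfortable in the regime $\delta < \sqrt c$.

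The main obstacle is the second-moment bound: one must show $\var(N_H) \leq (\E N_H)^2 e^{o(n)}$ so that $N_H$ concentrates and conditioning on a good bisection scarcely biases the marginal on $H$. This requires evaluating the joint probability that two bisections $(A_1, B_1), (A_2, B_2)$ both have target cut statistics as a function of their overlap $|A_1 \cap A_2|$, and optimizing the resulting large-deviation exponent. I expect the dominant contribution at overlap $n/4$ (the free overlap of two independent uniform bisections), with correlated overlaps controlled exactly by the inequality $\delta < \sqrt c$ --- this is the Kesten--Stigum threshold at which planted partitions first become distinguishable, and its appearance here is natural since the converse half of the conjecture asserts that for $\delta > \sqrt c$ the two models are $\Omega(n)$ apart. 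Once concentration is established, the remaining $O(\sqrt n)$ edits from cut-count and edge-count mismatches complete the coupling with the desired expected edit distance.
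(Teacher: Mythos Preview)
Your approach is genuinely different from the paper's. The paper argues non-constructively via Monge--Kantorovich duality: $\inf_\mu \E_\mu[\d_1(G,H)] = \sup_f |\E_\P f - \E_\Q f|$ over $1$-Lipschitz $f$. By Efron--Stein any such $f$ has variance $O(n)$ under both models, so if the supremum were $\omega(\sqrt n)$ one could threshold $f$ into a consistent estimator distinguishing $\P$ from $\Q$, contradicting the Mossel--Neeman--Sly contiguity theorem. This yields the stronger $O(\sqrt n)$ bound in a few lines, importing the hard part (contiguity at $\delta<\sqrt c$) as a black box; your construction, by contrast, would be explicit but must confront that hard part head-on.

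There is a real gap at the second-moment step. Sampling $(H,(A,B))$ uniformly from valid pairs gives $H$ a marginal proportional to $N_H$, and for this to approximate $\Q$ you need $N_H/\E N_H \to 1$ in $L^1$; a bound of the form $\var(N_H)\le (\E N_H)^2 e^{o(n)}$ is far too weak for that. Even the sharp second-moment computation (which is essentially the one carried out in the contiguity proof) yields only $\E_\Q[N_H^2]\le C\,(\E_\Q N_H)^2$ with a constant $C>1$ when $\delta<\sqrt c$, hence by Paley--Zygmund only $\Pr_\Q(N_H>0)\ge 1/C$, not $1-o(1)$, and certainly not concentration of $N_H$ about its mean. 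Upgrading a bounded second moment to what you need is exactly the nontrivial part of the MNS argument: they pass to contiguity via small-subgraph conditioning on short-cycle counts. So your scheme either must invoke MNS contiguity directly---at which point the duality route is shorter---or must reproduce that conditioning analysis, which is substantially more than ``the dominant contribution is at overlap $n/4$''. Finally, even granting $\Pr_\Q(N_H=0)=o(1)$, the failure event still contributes $o(1)\cdot\Theta(n)=o(n)$ to the expected cost, which suffices for the conjecture as stated but not for the $O(\sqrt n)$ you claim at the end.
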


Note that here the underlying kernels are truly different: One cannot compose $\kappa_1$ and $\kappa_2$ with (potentially different) measure preserving maps to make them equal almost everywhere. This makes the idea that $\P$ and $\Q$ can none the less be essentially equivalent somewhat surprising at first glace.

Our main result is a proof of Conjecture \ref{conj:br}. In fact, we prove an even stronger result: when $\delta < \sqrt{c}$, the two models can be coupled to an expected edit distance of $O(\sqrt{n})$.

\begin{thm}\label{thm:ub}
Let $0 \leq \delta \leq c$ and denote by $\Pi(\P,\Q)$ the space of coupling measures between $\P$ and $\Q$. If $\delta < \sqrt{c}$, then there exists a coupling $\mu \in \Pi(\P_n,\Q_n)$ such that
\[ \E_{\mu}\left[\d_1(G,H)\right] = O(\sqrt{n}). \]
\end{thm}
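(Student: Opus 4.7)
The approach is to construct an explicit coupling by identifying in a sample $G\sim\Q$ a balanced bisection that will play the role of the planted bisection of the corresponding $\P$-sample. The key observation is that, for any fixed balanced bisection $\sigma=(A,B)$ of $[n]$ and any fixed triple of block edge counts $(m_{AA},m_{BB},m_{AB})$, both $\Q$ and $\P$ (the latter conditionally on the planted bisection being $\sigma$) induce the \emph{same} uniform distribution on graphs with those counts. Consequently, once a bisection of $G$ has been aligned with the canonical one and the three block counts have been matched, no editing within blocks is required, and the edit distance is controlled solely by the count mismatch.

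To realize this, first show that a typical sample $G\sim\Q$ admits many ``good'' balanced bisections: those whose count profile lies in a window of width $O(\sqrt n)$ around the $\P$-means $(n(c-\delta)/8,\,n(c-\delta)/8,\,n(c+\delta)/4)$. Denote this set by $\Sigma(G)$. A first moment computation gives $\E_{\Q}|\Sigma(G)|$ of order $\binom{n}{n/2}\exp(-\Theta(n\delta^2/c))$, which is exponentially large precisely when $\delta<\sqrt c$. A matching second moment bound---controlling pairs of bisections by their overlap $|A\cap A'|$---then yields concentration of $|\Sigma(G)|$ around its expectation under the same hypothesis. Now sample $G\sim\Q$, sample $\sigma\in\Sigma(G)$ uniformly at random, and relabel $G$ so that $\sigma$ becomes the canonical bisection. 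By the joint permutation-invariance of $\Q$ and of $\Sigma$, the marginal law of $\sigma$ is uniform over balanced bisections of $[n]$, matching the corresponding marginal of $\P$. Next, sample a target count triple from $\P$'s three-Binomial distribution, quantile-coupled to the current count triple of $G$; since both triples have the same means and standard deviations $O(\sqrt n)$, this coupling achieves expected $L^1$ discrepancy $O(\sqrt n)$. Finally, edit $G$ within each block by uniformly adding or removing the appropriate number of edges to meet the target counts. Because $G$'s within-block edges, conditional on the count, are uniform (inherited from $\Q$), the resulting graph $H$ has the correct block-wise uniform conditional law, and hence $H\sim\P$. The number of edits equals the $L^1$ count discrepancy, so $\E\,\d_1(G,H)=O(\sqrt n)$.

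The main obstacle is that uniformly sampling $\sigma\in\Sigma(G)$ implicitly reweights the conditional law of $G$ given $\sigma$ by a factor $|\Sigma(G)|^{-1}$, which must be shown not to perturb the induced count distribution by more than $O(\sqrt n)$ in $L^1$. Controlling this bias is precisely the role of the second moment bound on $|\Sigma(G)|$, and this is where the threshold $\delta<\sqrt c$ enters sharply: above this threshold the second moment is dominated by pairs of bisections with nontrivial overlap, reflecting the onset of detectability of the planted structure and preventing the requisite concentration.
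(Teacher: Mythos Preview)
Your approach is entirely different from the paper's. The paper never constructs a coupling: it invokes Monge--Kantorovich duality to rewrite $\inf_\mu\E_\mu[\d_1(G,H)]$ as $\sup_f|\E_\P f-\E_\Q f|$ over $1$-Lipschitz $f$, bounds $\var(f)=O(n)$ under both models via Efron--Stein, and then observes that if some $f$ had $|\E_\P f-\E_\Q f|=\omega(\sqrt n)$ one could threshold it into a consistent estimator distinguishing $\P$ from $\Q$---impossible for $\delta<\sqrt c$ by the contiguity result (Theorem~\ref{prop:cont}), which is used as a black box. The threshold enters only through that citation; no explicit structure of the block model is exploited.

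Your ``quiet planting'' construction is a natural direct attack, but it has a genuine gap at exactly the point you flag. A second-moment bound on $|\Sigma(G)|$ yields at best $|\Sigma(G)|/\E|\Sigma|\to 1$ in probability; this makes the size-biased conditional law of $G$ given $\sigma$ (density reweighted by $|\Sigma(G)|^{-1}$) within $o(1)$ of the un-biased one in total variation, and hence the law of your output $H$ within $o(1)$ of $\P$ in total variation. Repairing an $o(1)$ TV defect by maximal coupling then costs $o(1)\cdot\Theta(n)=o(n)$ in expected edit distance, not $O(\sqrt n)$. To reach $O(\sqrt n)$ you would need either a TV bound of order $n^{-1/2}$ or a structured coupling between the biased and un-biased conditional laws at edit cost $O(\sqrt n)$, and neither follows from a bare second-moment estimate. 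Two further issues: the concentration you actually need is under the \emph{tilted} law (conditioning on $\sigma\in\Sigma(G)$ effectively plants $\sigma$, so $|\Sigma(G)|$ must concentrate under $\P$, not just $\Q$), and the second-moment computation you defer is essentially the calculation that underlies Theorem~\ref{prop:cont}, so the hard analytic work is not being bypassed.
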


We also give a lower bound on the expected edit distance under any coupling.

\begin{thm}\label{thm:lb}
Let $0 \leq \delta \leq c$. For any coupling $\mu \in \Pi(\P,\Q)$,
\[ \E_{\mu}\left[\d_1(G,H)\right] = \Omega\left(\frac{c^{\log(n)^{1/3}}}{\log(n)^{1/3}}\right).\]
\end{thm}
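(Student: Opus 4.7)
The plan is to exhibit an isomorphism-invariant graph statistic $f$ whose expectation under $\P$ and $\Q$ differs noticeably, and which is Lipschitz with respect to the edit distance $\d_1$. Then for any coupling $\mu$,
\[
|\E_\P f - \E_\Q f| \leq \E_\mu|f(G) - f(H)| \leq L(f) \cdot \E_\mu \d_1(G,H),
\]
where $L(f)$ denotes the edit-distance Lipschitz constant of $f$, and rearranging will give the desired lower bound on $\E_\mu \d_1$.

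The natural choice of $f$ is the cycle count $X_k(G) = \#\{k\text{-cycles in }G\}$, taken with an even integer $k$ of order $(\log n)^{1/3}$. The kernel $\kappa_2$ admits a spectral decomposition with nonzero eigenvalues $c$ (eigenvector $\mathbf{1}$) and $-\delta$ (eigenvector $\ind_{[0,1/2]} - \ind_{[1/2,1]}$). Applying the usual first-moment computation for cycle counts on inhomogeneous random graphs, together with a Poisson-style variance bound in the sparse regime, gives
\[
\E_\P X_k = \frac{c^k + (-\delta)^k}{2k}(1+o(1)), \qquad \E_\Q X_k = \frac{c^k}{2k}(1+o(1)),
\]
with $\var X_k = O(c^k/k)$ in both models. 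In particular the gap $|\E_\P X_k - \E_\Q X_k|$ is $\Theta(\delta^k/k)$ for even $k$.

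For the Lipschitz side, I would restrict attention to the high-probability event $\mathcal{E}$ that both $G$ and $H$ have maximum degree at most $\Delta = O(\log n/\log \log n)$; both $\P$ and $\Q$ satisfy $\mathbf{P}(\mathcal{E}^c) = n^{-\omega(1)}$ by standard Poisson degree-tail estimates. On $\mathcal{E}$, one edge modification alters $X_k$ by at most the number of $k$-cycles through the edited edge, and this is in turn bounded by a $(k-1)$-step walk count in a degree-$\Delta$ graph, namely $\Delta^{k-1}$. Iterating along an optimal edit path between $G$ and an isomorphic copy $H'$ of $H$ gives $|X_k(G) - X_k(H')| \leq \Delta^{k-1} \d_1(G,H)$, with the $\mathcal{E}^c$ contribution negligible in expectation because of the super-polynomial decay of its probability.

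Combining the two estimates yields a bound of the form
\[
\E_\mu \d_1(G,H) \geq \Omega\!\left(\frac{\delta^k}{k \Delta^{k-1}}\right),
\]
from which one aims to extract the stated $(\log n)^{1/3}$-exponent rate by a careful choice of $k$. The main obstacle is precisely this balancing: the naive worst-case Lipschitz bound is much too weak, and both $\delta^k$ and $\Delta^{k-1}$ grow geometrically in $k$, so the trade-off is delicate. Refinements such as restricting the count to cycles lying in low-degree subgraphs, or to isolated cycle components (which change by $O(1)$ under a single edit), would reduce the effective Lipschitz constant at the cost of attenuating the signal, and it is this trade-off—together with the $n^{-\omega(1)}$ tail control—that I expect to yield the precise subpolynomial rate claimed.
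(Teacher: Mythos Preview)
Your overall framework---weak duality plus cycle counts---matches the paper, but the execution has a genuine gap that you yourself flag at the end without resolving. Using the raw count $X_k$ as the test function does not work: on the maximum-degree event $\mathcal{E}$ your Lipschitz constant is $\Delta^{k-1}$ with $\Delta \asymp \log n/\log\log n$, and for any $k\to\infty$ this swamps the signal, since
\[
\frac{\delta^k}{k\,\Delta^{k-1}} = \frac{\delta}{k}\left(\frac{\delta}{\Delta}\right)^{k-1} \longrightarrow 0.
\]
Optimizing over $k$ only recovers an $\Omega(1)$ bound (take $k$ bounded), not the growing rate in the theorem. So the ``delicate trade-off'' you describe cannot be won with $X_k$ and a worst-case Lipschitz estimate; the obstruction is not just a matter of tuning constants.

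The paper sidesteps this entirely by replacing $X_k$ with $Y_k$, the \emph{maximum number of edge-disjoint $k$-cycles} in a packing of $G$. This statistic is $1$-Lipschitz in edit distance by construction: a single edge lies in at most one cycle of any disjoint packing, so no degree conditioning is needed. The remaining work is to show that passing from $X_k$ to $Y_k$ does not destroy the signal. The paper does this via the sandwich $X_k \ge Y_k \ge X_k - (2k)^k Z_k$, where $Z_k$ counts embeddings of unions of two overlapping $k$-cycles; each such union has at least one more edge than vertices, so $\E Z_k = O(2^{2k^2} c^{2k+1}/n)$, which is $o(1)$ once $k = (\log n)^{1/3}$. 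Hence $\E Y_k = \E X_k + o(1)$ under both $\P$ and $\Q$, and the gap $\delta^k/(2k)$ survives intact.

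Your closing suggestion of counting isolated cycle components is in the right spirit (it, too, is $O(1)$-Lipschitz), but you would then need exactly the same second-moment control on overlaps to show that isolated $k$-cycles capture essentially all $k$-cycles in expectation. That is the missing idea.
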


\section{Preliminaries}
\subsection{Optimal transport}
The proof strategy for both Theorem \ref{thm:ub} and Theorem \ref{thm:lb} is to obtain bounds on the quantity
\begin{equation}
\inf_\mu \E_{\mu}\left[\d_1(G,H)\right],
\end{equation}
where the infimum is taken over $\mu \in \Pi(\P,\Q)$. This is actually a well known type of problem called an optimal transport problem (for the cost $d_1$) and as such we will make use of a number of tools from the area. For an excellent survey of optimal transport, see \cite{San15}. The main item we will make use of is the following duality formula.

\begin{prop}[Monge-Kantorovich Duality]\label{prop:dual}
We have the equality
\begin{equation}\label{eq:dual}
\inf_\mu \E_\mu \lb \d_1(G,H) \rb = \sup_f \left| \E_P f - \E_Q f \right|,
\end{equation}
where $\mu \in \Pi(\P,\Q)$, $(G,H) \sim \mu$, and $f$ is 1-Lipschitz in the edit distance.
\end{prop}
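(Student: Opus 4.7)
The statement is the Kantorovich-Rubinstein duality specialized to the finite metric space of labeled $n$-vertex graphs under the edit distance. Since this space is finite, everything reduces cleanly to linear programming duality, so I would structure the proof in three steps.

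First I would dispatch the easy direction ($\geq$). For any coupling $\mu \in \Pi(\P,\Q)$ and any 1-Lipschitz $f$,
\[ \E_P f - \E_Q f = \E_\mu[f(G) - f(H)] \leq \E_\mu \d_1(G,H), \]
and applying the same bound to $-f$ gives $|\E_P f - \E_Q f| \leq \E_\mu \d_1(G,H)$. Taking sup over $f$ and inf over $\mu$ yields this inequality.

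For the reverse direction, I would formulate the left-hand side as a finite linear program: minimize $\sum_{G,H} \d_1(G,H)\,\mu(G,H)$ over nonnegative $\mu$ with marginals $\P$ and $\Q$. Its dual is
\[ \max \left\{ \sum_G \P(G)\phi(G) + \sum_H \Q(H)\psi(H) \,:\, \phi(G) + \psi(H) \leq \d_1(G,H) \text{ for all } G,H \right\}, \]
and since the primal is feasible (e.g.\ via the product measure $\P \otimes \Q$) and bounded, strong LP duality gives equality of the two optima.

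The main work is to restrict this dual to pairs of the form $(f,-f)$ with $f$ a 1-Lipschitz function. Given any feasible $(\phi,\psi)$, I would set $f(G) = \inf_H[\d_1(G,H) - \psi(H)]$. This $f$ is 1-Lipschitz as an infimum of 1-Lipschitz maps (using $|\d_1(G_1,H) - \d_1(G_2,H)| \leq \d_1(G_1,G_2)$), satisfies $f \geq \phi$ by construction, and satisfies $\psi(H) \leq -f(H)$ upon plugging $H' = H$ into the infimum defining $f(H)$. Hence the dual objective for $(\phi,\psi)$ is bounded above by $\E_P f - \E_Q f$, and since $f \leftrightarrow -f$ produces the absolute value, we recover the right-hand side of \eqref{eq:dual}. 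This $c$-transform step is the only nontrivial piece; if one wanted to avoid LP duality entirely, a Hahn-Banach or Fenchel-Rockafellar argument would also work, but finiteness of the state space makes the LP route by far the cleanest.
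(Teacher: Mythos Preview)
Your proof is correct. The paper does not actually prove this proposition: it simply states that \eqref{eq:dual} ``is in fact a special case of Monge-Kantorovich duality for cost functions which satisfy the triangle inequality'' and refers the reader to \cite{San15} for background. So there is nothing to compare against; you have supplied a self-contained argument where the paper is content to cite.

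The argument you give is the standard one for Kantorovich--Rubinstein duality on a finite space, and it goes through cleanly here. A couple of minor remarks worth being aware of. First, $\d_1$ as defined in the paper is only a pseudometric on labeled graphs (it vanishes on isomorphic pairs), but your argument only uses nonnegativity, $\d_1(G,G)=0$, and the triangle inequality, so nothing breaks. Second, finiteness is doing real work for you: it guarantees the LP is bounded and that the $c$-transform $f(G)=\inf_H[\d_1(G,H)-\psi(H)]$ is finite, so your remark that the LP route is cleanest in this setting is well taken.
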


This is in fact a special case of Monge-Kantorovich duality for cost functions which satisfy the triangle inequality. Note that Proposition \ref{prop:dual} immediately implies the useful inequality
\[ \inf_\mu \E_\mu \lb \d_1(G,H) \rb \geq \sup_f \left| \E_P f - \E_Q f \right|, \]
which is often referred to as \emph{weak duality}.

It is not hard to see that the infimum in \eqref{eq:dual} is always attained: the space $\Pi(\P,\Q)$ is compact for the weak convergence in duality with the space of continuous functions. Hence, by the definition of weak convergence, the functional sending $\mu$ to $\E_\mu \lb d_1(G,H)\rb$ is continuous. This is enough to guarantee the existence of a minimizer $\mu^*$ (Weierstrass). As it turns out, the supremum is also attained, but the proof is somewhat more involved (see Section 1.6 in \cite{San15}).

\begin{prop}\label{prop:inf}
There exists a coupling $\mu^* \in \Pi(\P,\Q)$ and a 1-Lipschitz function $f^*$ such that
\[ \E_{\mu^*} \lb \d_1(G^*,H^*) \rb = \left| \E_P f^* - \E_Q f^* \right|, \]
where $(G^*,H^*) \sim \mu^*$.
\end{prop}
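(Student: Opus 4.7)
Because the vertex set is $[n]$, the set $\Gc_n$ of simple graphs on $[n]$ is \emph{finite}, so $\P$ and $\Q$ are discrete measures on a finite space and the entire optimal transport problem is a finite-dimensional linear program. The existence of a minimizing coupling $\mu^*$ has already been recorded in the excerpt, so the content of the proposition is the attainment of the supremum together with equality in \eqref{eq:dual}. I would establish both at once by invoking strong LP duality and then converting the resulting dual optimizer into a single $1$-Lipschitz function via the standard Kantorovich--Rubinstein reduction.

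More concretely, write the coupling problem as the primal LP
\[
\min \sum_{G,H \in \Gc_n} \mu(G,H)\, \d_1(G,H) \quad \text{s.t.}\quad \mu \geq 0,\ \textstyle\sum_{H} \mu(G,H) = \P(G),\ \sum_{G} \mu(G,H) = \Q(H).
\]
The feasible polytope $\Pi(\P,\Q)$ is nonempty (the product coupling lies in it) and $\d_1 \geq 0$, so the LP has a finite optimum. Its dual is
\[
\max_{\phi,\psi} \sum_G \phi(G)\P(G) + \sum_H \psi(H)\Q(H) \quad \text{s.t.}\quad \phi(G) + \psi(H) \leq \d_1(G,H) \text{ for all } G,H,
\]
and strong LP duality (in the finite-dimensional setting) yields a dual optimizer $(\phi^*,\psi^*)$ whose value equals that of the primal.

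To produce a single $1$-Lipschitz function $f^*$, I would define $f^*(G) := \inf_H \lb \d_1(G,H) - \psi^*(H) \rb$. The triangle inequality for $\d_1$ immediately gives that $f^*$ is $1$-Lipschitz in $\d_1$; feasibility of $(\phi^*,\psi^*)$ gives $f^*(G) \geq \phi^*(G)$ pointwise, and specializing the infimum at $H'=H$ gives $f^*(H) \leq -\psi^*(H)$. Combining these yields
\[
\E_P f^* - \E_Q f^* \;\geq\; \E_P \phi^* + \E_Q \psi^* \;=\; \inf_\mu \E_\mu \lb \d_1(G,H) \rb,
\]
while $(f^*, -f^*)$ is itself feasible for the dual, so the inequality is actually an equality. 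Taking $f^*$ or $-f^*$ as needed handles the absolute value in \eqref{eq:dual}.

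\textbf{Main obstacle.} The only place where any real work is done is the last reduction, which genuinely uses the metric structure of $\d_1$ (symmetry together with the triangle inequality); the remaining steps are immediate once one notes that $\Gc_n$ is finite. The measurability and regularity subtleties that make Proposition~\ref{prop:inf} "more involved" in the general-measure setting (as alluded to via the reference to \cite{San15}) simply do not arise here.
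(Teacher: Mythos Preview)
Your proof is correct. The paper does not actually prove Proposition~\ref{prop:inf} in-line: it records the compactness argument for the primal minimizer $\mu^*$ and then defers the attainment of the dual supremum to Section~1.6 of \cite{San15}, which treats the general Polish-space Kantorovich--Rubinstein theorem with its attendant lower-semicontinuity and tightness machinery. Your route is genuinely different and more elementary: you exploit that $\Gc_n$ is finite, so the transport problem is a finite-dimensional linear program whose dual attains its optimum by strong LP duality, and you then upgrade the dual pair $(\phi^*,\psi^*)$ to a single $1$-Lipschitz $f^*$ via the $c$-transform. This yields a fully self-contained argument with no measure-theoretic overhead, at the price of not applying to general cost spaces; since the paper only ever invokes the proposition on the finite space $\Gc_n$, nothing is lost. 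One small remark: $\d_1$ is only a \emph{pseudometric} on labeled graphs (it vanishes on isomorphic pairs), but your argument uses only $\d_1(H,H)=0$, symmetry, and the triangle inequality, so this causes no trouble.
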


\subsection{Stochastic block models}
The study of sparse random graphs has also seen some attention from a very different group, motivated by the problem of detecting hidden communities planted in a random model. The stochastic block model for two balanced communities is usually defined as follows. Take parameters $a > b > 0$ (for a ferromagnetic block model) and assign to each vertex $i$ of an $n$ vertex graph a random spin $\sigma_i \in \{-1,1\}$. Then, connect edges $i$ and $j$ with probability $a/n$ if $\sigma_i \sigma_j = 1$, and with probability $b/n$ otherwise, independently for every edge. Notice that samples from the stochastic block model are identical to samples from $\P$, with $a = c+\delta$ and $b = c-\delta$.

Much of the previous work on such models is algorithmic, but some recent theoretical results have been obtained which are extremely relevant to the problem at hand. Of particular interest is \cite{MNS15}, in which the authors settle a conjecture from \cite{DKMZ11} and give an information-theoretic lower bound for the detection problem. This lower bound comes in the form of a contiguity result.

\begin{thm}[Theorem 2.4 from \cite{MNS15}]\label{prop:cont}
When $\delta < \sqrt{c}$, the models $\P_n$ and $\Q_n$ are mutually contiguous, i.e., for a sequence of events $A_n$ we have
\[ \P_n(A_n) \to 0 \iff \Q_n(A_n) \to 0. \]
\end{thm}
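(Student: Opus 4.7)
My plan follows the second-moment strategy of Mossel--Neeman--Sly, combined with small subgraph conditioning to upgrade from one-sided to mutual contiguity. The central object is the likelihood ratio $L_n = \d\P_n/\d\Q_n$, which can be written as an average over the hidden planted labeling $\sigma\in\{\pm1\}^n$:
\[
L_n(G) = \E_\sigma \prod_{i<j} \left(\frac{(c+\sigma_i\sigma_j\delta)/n}{c/n}\right)^{\!G_{ij}}\!\left(\frac{1-(c+\sigma_i\sigma_j\delta)/n}{1-c/n}\right)^{\!1-G_{ij}}.
\]

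First I would control $\E_{\Q_n}[L_n^2]$. Squaring introduces an independent copy $\tau$ of the labeling, and averaging over $G\sim\Q_n$ factors over the $\binom{n}{2}$ edges to give
\[
\E_{\Q_n}[L_n^2] = \E_{\sigma,\tau}\prod_{i<j}\left(1 + \tfrac{\delta^2}{cn}\sigma_i\sigma_j\tau_i\tau_j + O(n^{-2})\right).
\]
Taking logarithms turns the product into a quadratic form in the overlap $\sum_i \sigma_i\tau_i$. Since $n^{-1/2}\sum_i\sigma_i\tau_i$ converges to a standard Gaussian $Z$, one obtains a finite limit exactly when $\tfrac{\delta^2}{c}<1$, namely a constant multiple of $(1-\delta^2/c)^{-1/2}$. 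Uniform integrability, needed to justify the interchange of limit and expectation, follows by truncating on $|\sum_i\sigma_i\tau_i|\le n^{2/3}$ and using standard Chernoff tails on the complement. By Cauchy--Schwarz this gives $\P_n(A_n)\to 0$ whenever $\Q_n(A_n)\to 0$, since $\P_n(A_n) \le \sqrt{\E_{\Q_n}[L_n^2]}\,\sqrt{\Q_n(A_n)}$.

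For the reverse direction, the fact that $\lim_{n\to\infty}\E_{\Q_n}[L_n^2]>1$ rules out any naive argument showing $L_n\to 1$ in $L^1(\Q_n)$. The plan is instead to apply Janson's small subgraph conditioning theorem to the short cycle counts $C_k(G)$, $k\ge 3$. Under $\Q_n$ the $C_k$ converge jointly to independent Poissons with means $\lambda_k = c^k/(2k)$; under $\P_n$ the means become $\lambda_k(1+(\delta/c)^k)$. Janson's criterion asks that the excess variance of $L_n$ be entirely accounted for by these cycle statistics, concretely that $\sum_{k\ge 3}\lambda_k(\delta/c)^{2k}$ reproduces the limiting log-second-moment computed above. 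Once this matching is verified, $L_n$ converges in $L^2(\Q_n)$ to a strictly positive limit, yielding $\Q_n(A_n)\to 0$ whenever $\P_n(A_n)\to 0$.

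The main obstacle is the joint moment calculation underlying the small subgraph conditioning step: one must compute $\E_{\Q_n}\bigl[L_n\cdot (C_{k_1})_{j_1}\cdots(C_{k_r})_{j_r}\bigr]$ and pair the planted-partition expansion of $L_n$ against the combinatorics of edge-disjoint short cycles in a sparse random graph, keeping track of the lower-order corrections so that the sum $\sum_k\lambda_k(\delta/c)^{2k}$ closes correctly against the limiting second moment. The second-moment computation itself, by contrast, is a fairly routine Taylor expansion followed by a Gaussian limit, and the failure of the threshold $\delta=\sqrt c$ is already visible in the divergence of $\E[\exp(\tfrac{\delta^2}{2c}Z^2)]$.
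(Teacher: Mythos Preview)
The paper does not give its own proof of this statement: it is quoted verbatim as Theorem~2.4 of \cite{MNS15} and used as a black box. There is therefore nothing in the paper to compare your argument against.

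That said, your sketch is essentially the argument of \cite{MNS15} itself: bound $\E_{\Q_n}[L_n^2]$ by expanding the square over two independent labelings $\sigma,\tau$, reduce to a quadratic in the overlap, and obtain one direction of contiguity via Cauchy--Schwarz; then invoke small subgraph conditioning on the short-cycle counts $C_k$ to upgrade to mutual contiguity. Two small points worth tightening if you were to write this out in full. First, the limiting second moment is not literally $(1-\delta^2/c)^{-1/2}$: the $(1-p)$ factors in the likelihood ratio contribute additional constant-order terms (of the form $\exp(\delta^2/(2c)+\delta^4/(4c^2))$ in the MNS computation), and these extra factors are exactly what must match $\prod_{k\ge 3}\exp(\lambda_k(\delta/c)^{2k})$ in Janson's criterion. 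Second, the ``main obstacle'' you flag---the joint factorial-moment computation $\E_{\Q_n}[L_n\,(C_{k_1})_{[j_1]}\cdots(C_{k_r})_{[j_r]}]$---is genuinely the heart of the MNS proof, and your outline correctly identifies it without carrying it out. None of this is a gap in your plan, just places where the details require care.
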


To be precise, contiguity is a notion that should be applied to sequences of probabilities, and so when we say the models $\P_n$ and $\Q_n$ are mutually contiguous, we really mean that the sequences $\{\P_n\}$ and $\{\Q_n\}$ are.

An easy but important corollary which shall be our main point of contact with Theorem \ref{prop:cont} is that it is impossible to consistently distinguish between samples from $\P$ and $\Q$ below the contiguity threshold.

\begin{cor}\label{cor:est}
Generate a random pair $(\sigma,G)$, where $\sigma \in \{0,1\}$ and $G$ is an $n$-vertex graph as follows. Take $\sigma \sim \ber(1/2)$ and sample $G$ from $\P$ if $\sigma = 1$, sampling $G$ from $\Q$ otherwise. When $\delta < \sqrt{c}$, there is no estimator $\pi$ taking an $n$-vertex graph to $\{0,1\}$ such that
\begin{equation}\label{eq:est}
\Pr\lb \pi(G) = \sigma \rb = 1 - o(1).
\end{equation}
\end{cor}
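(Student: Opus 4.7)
The plan is to argue by contradiction directly from the contiguity statement in Theorem \ref{prop:cont}. Suppose an estimator $\pi$ satisfying \eqref{eq:est} exists; I would introduce the event
\[ A_n = \{ G : \pi(G) = 1 \} \]
and seek to show that $\P_n(A_n) \to 1$ while $\Q_n(A_n) \to 0$, which violates mutual contiguity (taking the complementary sequence $A_n^c$, since $\P_n(A_n^c) \to 0$ but $\Q_n(A_n^c) \to 1$).

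To produce those two conditional statements from the single averaged hypothesis, I would expand via the law of total probability:
\[ \Pr[\pi(G) = \sigma] = \tfrac{1}{2} \P_n(\pi(G) = 1) + \tfrac{1}{2}\Q_n(\pi(G) = 0). \]
Since each summand is bounded above by $1/2$, the assumption that the left-hand side equals $1 - o(1)$ forces both $\P_n(A_n) = \P_n(\pi(G)=1) \to 1$ and $\Q_n(A_n^c) = \Q_n(\pi(G) = 0) \to 1$, i.e., $\Q_n(A_n) \to 0$. This is precisely the configuration ruled out by Theorem \ref{prop:cont} applied to the sequence $\{A_n^c\}$: one has $\P_n(A_n^c) \to 0$ while $\Q_n(A_n^c) \not\to 0$.

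There is no real obstacle here; the entire content of the corollary is packaging the contiguity statement about arbitrary event sequences into the language of hypothesis testing. The only point worth being careful about is that the hypothesized success probability is an average over the Bernoulli prior on $\sigma$, so one must make sure to separate it into type-I and type-II error statements before invoking contiguity.
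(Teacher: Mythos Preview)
Your proposal is correct and follows exactly the paper's approach: define $A_n = \{G : \pi(G) = 1\}$ and observe that $\P_n(A_n) \to 1$ while $\Q_n(A_n) \to 0$, contradicting contiguity. The paper's proof is terser and omits the law-of-total-probability step you spell out, but the argument is identical.
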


\begin{proof}
Given such a $\pi$, take $A_n = \{G: \pi(G) = 1\}$. Then $\P_n(A_n) \to 1$ while $\Q_n(A_n) \to 0$.
\end{proof}

We often call an estimator $\pi$ satisfying \eqref{eq:est} an estimator for the block model or simply an estimator for $\P$. It is known that such estimators exist for all $\delta > \sqrt{c}$ and can even be efficiently computed (see \cite{MNS15} and \cite{MNS18}).

\section{Proofs}
At a high level, the proof strategy for Theorem \ref{thm:ub} is to use Monge-Kantorovich duality along with an upper bound for
\[ \sup_f \left| \E_P f - \E_Q f \right|. \]
If $\left| \E_P f - \E_Q f \right|$ is too large for some $f$, then $f$ can be used as an estimator for $\P$ by thresholding. In order to make this work, we need to know that $f$ is sufficiently concentrated around its mean. 

One can try to apply Azuma-Hoeffding using the vertex exposure martingale, but the martingale differences can only be bounded by the maximum degree of the graph which is a bit less than $\log(n)$ under both $\P$ and $\Q$. This gives a tail bound of the form
\[ \Pr \lb|f - \E f| > t \rb < 2\exp\left(-\frac{2t^2}{n\log(n)^2}\right) \]
at best, losing a $\log(n)^2$ factor in the variance. Rather than looking for tails with the correct variance, we will instead use the following inequality which bounds the variance directly.

\begin{prop}[Efron-Stein \cite{Ste86}]\label{prop:es}
Let $X_1,\ldots,X_n$ be independent random variables and set $X = (X_1,\ldots,X_n)$. Denote by $X^{(i)}$ the random variable obtained from $X$ by re-sampling $X_i$. Then we have
\[ \var(f(X)) \leq \frac{1}{2}\sum_{i=1}^n\E\left[\left(f(X) - f(X^{(i)})\right)^2\right]. \]
\end{prop}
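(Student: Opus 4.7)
The plan is to prove the Efron-Stein inequality by first tensorizing the variance along the independent coordinates and then reducing to a one-variable identity.

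For the tensorization step, I would introduce the Doob martingale $M_i = \E[f(X) \mid \Fc_i]$ with $\Fc_i = \sigma(X_1, \ldots, X_i)$, so that $M_0 = \E f(X)$ and $M_n = f(X)$. Writing $\Delta_i = M_i - M_{i-1}$, orthogonality of martingale differences in $L^2$ gives $\var(f(X)) = \sum_{i=1}^n \E[\Delta_i^2]$, with no $n$-factor loss of the sort that a naive Cauchy-Schwarz on $f(X) - f(X')$ would incur. Using independence of the coordinates, I would then identify $\Delta_i = \E[f(X) - \E_i f(X) \mid \Fc_i]$, where $\E_i$ denotes integration over $X_i$ alone: this holds because $\E_i f(X)$ depends only on the other coordinates, so conditioning it on $\Fc_i$ is the same as conditioning on $\Fc_{i-1}$. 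Conditional Jensen then yields $\E[\Delta_i^2] \leq \E[(f(X) - \E_i f(X))^2] = \E[\var_i(f)]$, where $\var_i(f)$ is the variance of $f(X)$ viewed as a function of $X_i$ with the others fixed. Summing over $i$ produces the tensorization bound $\var(f(X)) \leq \sum_i \E[\var_i(f)]$.

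For the one-variable step, I would invoke the elementary identity $\var(Y) = \tfrac{1}{2}\E[(Y - Y')^2]$, valid whenever $Y'$ is an independent copy of $Y$, which follows from expanding the square. Applied conditionally on the other coordinates, with $X_i'$ being the independent resample used to form $X^{(i)}$, this gives $\var_i(f) = \tfrac{1}{2}\E\bigl[(f(X) - f(X^{(i)}))^2 \bigm| X_1, \ldots, X_{i-1}, X_{i+1}, \ldots, X_n\bigr]$. Substituting into the tensorization bound and taking outer expectations yields the stated inequality.

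There is no serious obstacle here, as Efron-Stein is a classical result. The only step warranting care is the identification of $\Delta_i$ with the conditional expectation of $f(X) - \E_i f(X)$ given $\Fc_i$; this identity is where independence of the coordinates enters crucially, and it is precisely what allows the argument to avoid the extra factor of $n$ that would arise from bounding $(f(X) - f(X'))^2$ by a telescoping sum.
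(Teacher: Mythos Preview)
Your argument is correct and is one of the standard proofs of the Efron--Stein inequality: tensorize the variance via the Doob martingale on the coordinate filtration, use independence to identify each martingale increment with a conditional expectation of $f(X)-\E_i f(X)$, apply conditional Jensen to pass to $\E[\var_i(f)]$, and then invoke the polarization identity $\var(Y)=\tfrac12\E[(Y-Y')^2]$ coordinatewise. The only point to be slightly careful about, which you flagged yourself, is the step $\E[\E_i f(X)\mid\Fc_i]=\E[\E_i f(X)\mid\Fc_{i-1}]=M_{i-1}$; this is exactly where independence of the $X_j$ is used, and your justification is fine.

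As for comparison with the paper: there is nothing to compare against. The paper states Proposition~\ref{prop:es} with a citation to \cite{Ste86} and does not supply a proof; it is quoted as a classical tool and then applied in Lemma~\ref{lem:conc}. So your proposal simply fills in a proof that the paper deliberately omits.
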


With Efron-Stein, we can prove concentration for Lipschitz functions for samples from any bounded kernel.

\begin{lem}\label{lem:conc}
Let $\kappa$ be a bounded kernel with $d = \sup \kappa$, take $G_n \sim G(n,\kappa)$, and let $f: \Gc_n \to \R$ be 1-Lipschitz in the edit distance. Then we have
\[ \var(f(G_n)) = O(n). \]
\end{lem}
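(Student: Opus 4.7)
The plan is to apply Efron-Stein (Proposition \ref{prop:es}) after first writing $G_n$ as a deterministic function of a collection of independent random variables. The natural setup is to pick $x_1, \dots, x_n$ i.i.d. $U(0,1)$ together with independent $U_{ij} \sim U(0,1)$ for $1 \leq i < j \leq n$, and to let edge $(i,j)$ be present exactly when $U_{ij} \leq \kappa(x_i,x_j)/n$. Writing $X = (x_1,\dots,x_n,U_{12},\dots,U_{(n-1)n})$, the graph $G_n$ is a deterministic function of $X$, so that $f(G_n) = g(X)$ for some $g$, and the coordinates of $X$ are independent. Efron-Stein then reduces the problem to bounding $\E\bigl[(g(X) - g(X^{(i)}))^2\bigr]$ coordinate by coordinate.

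There are two regimes to handle. When we re-sample a coordinate $x_i$, only the edges incident to vertex $i$ can change, so
\[ \d_1(G_n, G_n^{(i)}) \leq |E(G_n) \triangle E(G_n^{(i)})| \leq \deg_{G_n}(i) + \deg_{G_n^{(i)}}(i), \]
and by the Lipschitz assumption the squared difference is bounded by $2\deg_{G_n}(i)^2 + 2\deg_{G_n^{(i)}}(i)^2$. Conditional on $x_i$, the degree of $i$ is a sum of $n-1$ independent Bernoullis of parameter at most $d/n$, so its conditional mean and variance are both at most $d$, giving $\E[\deg_{G_n}(i)^2] \leq d + d^2 = O(1)$. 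Summing over $i$ contributes $O(n)$.

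When we re-sample a coordinate $U_{ij}$, at most the single edge $(i,j)$ changes, so $\d_1(G_n,G_n^{(ij)}) \leq 1$ and the Lipschitz condition gives a squared difference of at most $1$. But this change actually occurs only when the two Bernoullis disagree, which has probability $2p(1-p) \leq 2d/n$ where $p = \kappa(x_i,x_j)/n$. Thus each $U_{ij}$ term contributes at most $O(1/n)$, and the sum over all $\binom{n}{2}$ pairs is again $O(n)$.

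Adding the two contributions and halving gives $\var(f(G_n)) = O(n)$. The only subtle point is that one cannot simply group each $x_i$ with its incident edge variables and call it a single coordinate, since those groups share randomness across vertex pairs; splitting off the $U_{ij}$'s as their own independent coordinates is what keeps the analysis clean. Everything else is a routine degree-moment calculation.
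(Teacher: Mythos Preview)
Your proof is correct and follows essentially the same Efron--Stein argument as the paper: write $G_n$ as a function of the $n$ vertex variables $x_i$ and the $\binom{n}{2}$ edge variables $U_{ij}$, then bound the vertex-resampling terms by a degree second moment and the edge-resampling terms by $O(1/n)$ each. The only cosmetic difference is that the paper bounds the number of edges changed when resampling $x_i$ directly by a $\bin(n,d/n)$ variable (since edge $(i,j)$ flips only when $U_{ij}$ lands between $\kappa(x_i,x_j)/n$ and $\kappa(x_i',x_j)/n$), whereas you use the coarser bound $\deg_{G_n}(i)+\deg_{G_n^{(i)}}(i)$; both give an $O(1)$ second moment and hence the same $O(n)$ total.
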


\begin{proof}
The proof is a direct application of Efron-Stein. With $m = \binom{n}{2}$, $f(G_n)$ is a function of $n + m$ independent $U(0,1)$ random variables, $n$ type (or vertex) variables and $m$ edge variables. Notice that for a type variable $i$, $f(X) - f(X^{(i)})$ is stochastically dominated by a $\bin(n,d/n)$ random variable since $f$ is Lipschitz. So we can bound the marginal variance as
\[ \E\left[\left(f(X) - f(X^{(i)})\right)^2\right] \leq \var(\bin(n,d/n)) = d\left(1-\frac{d}{n}\right) \leq d. \]
Similarly for an edge variable $j$, $f(X) - f(X^{(j)})$ is stochastically dominated by a $\ber(d/n)$ random variable which gives
\[ \E\left[\left(f(X) - f(X^{(j)})\right)^2\right] \leq \var(\ber(d/n)) = \frac{d}{n}\left(1-\frac{d}{n}\right) \leq \frac{d}{n}. \]
Putting these together yields
\[ \var(f(G_n)) \leq \frac{1}{2}\left(dn + \frac{d}{n}m\right) \leq dn. \]
\end{proof}

We're now ready to prove Theorem \ref{thm:ub}.

\begin{proof}[Proof of Theorem \ref{thm:ub}]
Suppose that in \eqref{eq:dual} we have
\[ \sup_f\left| \E_\P f - \E_\Q f \right| = \omega(\sqrt{n}). \]
By Proposition \ref{prop:inf} there exists a 1-Lipschitz function $f$ such that
\[ \left| \E_\P f - \E_\Q f \right| = \omega(\sqrt{n}) = \alpha(n)\sqrt{n}, \]
where $\alpha(n)$ is a function with $\lim_{n\to\infty}\alpha(n) = \infty$. Using Lemma \ref{lem:conc} and Chebyshev's inequality we have
\begin{equation}\label{eq:conc}
\pr \left[ |f - \E f| > \sqrt{\alpha(n)}\sqrt{(c+
\delta)n}\right] \leq \frac{1}{\alpha(n)} \to 0
\end{equation}
for both $\P$ and $\Q$. Now, one of the terms $\E_\P f$ and $\E_\Q f$ must be at least the difference $\alpha(n)\sqrt{n}$. For every $n$, define the estimator $\pi$ as follows. If $E_{\P} f > \E_{\Q} f$ then let
\[ \pi(G) = \left\{ \begin{array}{cc}
   1  & \text{ if} \ f(G) \geq \E_{\P} f - \frac{\alpha(n)\sqrt{n}}{2} \\
   0  & \text{otherwise}
\end{array}\right.. \]
Similarly if $\E_{\Q} f > \E_{\P} f$ let
\[ \pi(G) = \left\{ \begin{array}{cc}
   0  & \text{ if} \ f(G) \geq \E_{\Q} f - \frac{\alpha(n)\sqrt{n}}{2} \\
   1  & \text{otherwise}
\end{array}\right.. \]
It follows from \eqref{eq:conc} that $\pi$ is an estimator for $\P$ which is impossible by Corollary \ref{cor:est}.

Returning to \eqref{eq:dual} we get the upper bound
\[ \inf_\mu \E_\mu \lb \d_1(G,H) \rb = O(\sqrt{n}). \]
Again referring to Proposition \ref{prop:inf}, there exists a coupling $\mu\in \Pi(\P,\Q)$ such that
\[ \E_\mu \lb \d_1(G,H) \rb = O(\sqrt{n}). \]
\end{proof}

We should note that it is important to make the distinction between $\E_\P f > \E_\Q f$ and $\E_\Q f > \E_\P f$ for every $n$, as even though $f$ concentrates around its mean for a fixed $n$, the values of $\E_\P f$ and $\E_\Q f$ might not be converging as $n$ grows.

Let us now turn to the proof of Theorem \ref{thm:lb}.

\begin{proof}[Proof of Theorem \ref{thm:lb}]
From weak duality, for any 1-Lipschitz function $f$ we have the inequality
\[ \inf_\mu \E_\mu \lb \d_1(G,H) \rb \geq \left| \E_\P f - \E_\Q f \right|. \]
Given a graph $G$ and a parameter $k\in \N$, define $Y_k$ to be the maximum number of $k$-cycles in a disjoint cycle packing of $G$. Set $f(G) = Y_k$, noting that $f$ is certainly 1-Lipschitz, as an edge can contribute to a single cycle in any disjoint packing.

To remains to control $\E_\P f$ and $\E_\Q f$. To this end, define $X_k$ to be simply the number of (potentially overlapping) $k$-cycles in $G$, counted as ordered sets of $k$ vertices, up to automorphism. Here, we have
\[ \E_\Q X_k = \frac{n(n-1)\cdots(n-k+1)}{2k}\left(\frac{c}{n}\right)^k = \frac{c^k}{2k} + o(1), \]
where the second equality holds for $k = o(\sqrt{n})$. Indeed, there are $\sim n^k/2k$ ways to choose the vertices along with an ordering for the cycle modulo automorphism, and a probability $(c/n)^k$ of the edges appearing in sequence. Extending this result to $\P$, it is shown in \cite{MNS15} (Section 3) that
\[ \E_\P X_k = \frac{c^k + \delta^k}{2k} + o(1), \]
again for $k = o(\sqrt{n})$.

Consider the set $\Hc$ of all graphs $H$ which can be obtained as the union of two distinct, non-disjoint $k$-cycles. Note that every graph $H \in \Hc$ has $m \leq 2k$ vertices and at least $m+1$ edges, since it cannot be a cycle itself. For every such $H$, define $X_H$ to be the number of injective homomorphisms from $H$ into $G$ . We can easily compute
\[ \E_\Q X_H \leq n^m \left(\frac{c}{n}\right)^{m+1} \leq \frac{c^{2k+1}}{n}. \]

Define $Z_k$ by
\[ Z_k = \sum_{H \in \Hc} X_H. \]
Relating $X_k$ and $Y_k$ through $Z_k$, we have the inequalities
\[ X_k \geq Y_k \geq X_k - (2k)^k Z_k, \]
since every embedding of a graph $H \in \Hc$ into $G$ contributes at most $(2k)^k$ $k$-cycles to $X_k$, and removing all such edges yields a graph with $X_k = Y_k$. A graph with $2k$ vertices has at most $(2k)^2/2 = 2k^2$ edges so, counting over all possible labelled graphs, we get $|\Hc| \leq 2^{2k^2}$, where graphs on $<2k$ vertices are identified with graphs on $2k$ vertices with isolated vertices. This gives
\[ \E_\Q Z_k \leq \frac{2^{2k^2}c^{2k+1}}{n}. \]
Taking $k = \log(n)^{1/3}$ (in fact, any $k = \log(n)^{\alpha}$, $\alpha < 1/2$ will work), the numerator is $o(n)$ and so $\E_\Q Z_k = o(1)$. Since the number of copies of any graph in $\P$ is stochastically dominated by the number of copies in a $G(n,(c+\delta)/n)$, $\E_\P Z_k = o(1)$ as well. Putting everything together, we end up with
\[ \left| \E_\P f - \E_\Q f \right| \geq \frac{\delta^{k}}{2k} - o(1) = \Omega\left(\frac{c^{\log(n)^{1/3}}}{\log(n)^{1/3}}\right). \]
\end{proof}

As noted, taking $k = \log(n)^{\alpha}$ for any $\alpha < 1/2$ improves the bound in Theorem \ref{thm:lb} to
\[ \frac{c^{\log(n)^{\alpha}}}{\log(n)^{\alpha}}, \]
but the value of this function remains somewhere between $\plog(n)$ and $\poly(n)$ for all $\alpha < 1/2$ and so this is not a particularly meaningful improvement.

\section{Concluding remarks}\label{sec:rem}
As mentioned, Conjecture \ref{conj:br} is only half of the conjecture made in \cite{BR11}. In addition, the authors conjecture that for $\delta > \sqrt{c}$ and $c > 1$ (to ensure a giant component), the models $\P$ and $\Q$ are essentially different. This means that it should be impossible to couple $\P$ and $\Q$ to an expected edit distance of $o(n)$. Now if we allow $\delta > A\sqrt{c}$ for some sufficiently large constant $A$, then this is certainly the case. This is because the minimum bisection in a graph $G \sim \Q$ is of the order $cn/4 - \Theta(\sqrt{c}n)$ with high probability, whereas $G \sim \P$ almost always has a bisection with $cn/4 - \Theta(\delta n/4)$ edges. Plugging the minimum bisection into \eqref{eq:dual} gives the desired $\Theta(n)$ lower bound.

It is not clear at all that we should be able to take $A = 1$ in the above. One might hope that for $\delta >\sqrt{c}$, the minimum bisections of $\P$ and $\Q$ would differ by $\Theta(n)$. One of the main factors mitigating progress in this area is the complexity of even computing the minimum bisection of a graph $G \sim \Q$. Writing $\m(G)$ for the value of the minimum bisection of $G$, the authors of \cite{DMS17} recently determined the second order growth of $m(G)$ through the formula
\begin{equation}\label{eq:minb}
\E_\Q\lb \m(G)\rb = n\left(\frac{c}{4} + \frac{\Pn^*}{2}\sqrt{c} + o(\sqrt{c})\right) + o(n).
\end{equation}
The constant $\Pn^* \approx 0.7632$ here is the ground state energy of the Sherrington-Kirkpatrick spin glass model, and has no known simple closed form. A similar formula for $\E_\P\lb \m(G) \rb$ is given in \cite{Sen18} with an even more mysterious constant $\Cn^*$. It would be a significant breakthrough to pinpoint the value of $\delta$ for which the minimum bisections in $\P$ and $\Q$ transition from having a difference $O(\sqrt{n})$ to $\Theta(n)$. It could even be that two separate phase transitions occur.

The work of \cite{JMRT16} uses some strong heuristic methods from statistical physics to make predictions in this direction. Although non-rigorous, these heuristics (including the so-called cavity method) have predicted a number of results which have since been proven rigorously (like Theorem \ref{prop:cont} above). They study an SDP relaxation of minimum bisection and predict that it detects communities down to $\delta = \sqrt{c}(1 + (8c)^{-1} + O(c^{-2}))$. This lower bound is at least partially vindicated by a matching upper bound from $\cite{MS16}$, who show that the SDP based estimator in fact does detect communities at $\delta = \sqrt{c} + \epsilon$ for any $\epsilon$, provided $c = c(\epsilon)$ can be taken sufficiently large. It seems as though minimum bisection should not do any worse than the SDP relaxation, which would allow us to take $A = 1 + o_c(1)$ above, but this is not a proof by any means.

Ignoring the question of bisections, it may well be that the models $\P$ and $\Q$ do in fact become essentially different at the contiguity threshold $\delta = \sqrt{c}$. Our choice of $f$ in the proof of Theorem \ref{thm:lb} was based on the simple fact that $\P$ is biased towards having more $k$-cycles than $\Q$ on average. Taking $k$ larger gives a better lower bound, but one cannot take $k$ past $o(\sqrt{\log(n)})$ without overlapping cycles contributing to the expectation. To normalize $f$ into a Lipschitz function would then require an analysis of this overlap and seems difficult. A radically different choice of $f$ may be able to give improved lower bounds. For $\delta < \sqrt{c}$, it seems as though $\Theta(\sqrt{n})$ should be the correct answer: If not, then the minimum bisections in $\P$ and $\Q$ are actually mixed to their second order terms, which would be somewhat surprising.

As a final remark, we should note that the notion of being essentially different is robust under $o(n)$ adversarial edits to our samples. If $\P$ and $\Q$ become essentially different when $\delta > \sqrt{c}$, then Monge-Kantorovich duality along with Lemma \ref{lem:conc} implies the existence of a \emph{robust} estimator for $\P$. As far as we are aware, no estimator which achieves the $\delta > \sqrt{c}$ threshold is known to be robust to $o(n)$ edits. For instance, one can remove all cycles of length $k = o(\log(n))$ with $o(n)$ edits, breaking any current cycle-based estimators which only count short cycles.

\newpage

\bibliographystyle{alpha}
\bibliography{couplings}

\end{document}